\newtheorem{theorem}{Theorem}[section]
\newaliascnt{conj}{theorem}
\newaliascnt{cor}{theorem}
\newaliascnt{lemma}{theorem}
\newaliascnt{fact}{theorem}
\newaliascnt{claim}{theorem}
\newaliascnt{prop}{theorem}
\newaliascnt{definition}{theorem}
\newtheorem{lemma}[lemma]{Lemma}
\newtheorem{prop}[prop]{Proposition}
\newtheorem{definition}[definition]{Definition}
\theoremstyle{definition}
\newaliascnt{example}{theorem}
\theoremstyle{remark}
\newaliascnt{rmk}{theorem}
\newtheorem{remark}[rmk]{Remark}
\def\sek~{\S{}}
\numberwithin{equation}{section}
\newcommand{\diam}{\operatorname{diam}}
\newcommand{\rvol}{\operatorname{vol}}
\newcommand{\dist}{\operatorname{d}}
\newcommand{\otw}{{\rm width}_{\hat{\mathcal{T}}}}
\newcommand{\area}{\operatorname{area}}
\newcommand{\RR}{\mathds{R}}
\newcommand{\CC}{\mathds{C}}
\newcommand{\NN}{\mathds{N}}
\renewcommand{\SS}{\mathbf{S}}
\newcommand{\TT}{\mathds{T}_{Cl}}
\newcommand{\TTT}{\mathds{T}}
\newcommand{\foc}{{\rm rad}^{\bigodot}}
\begin{document}
\title{Gehring Link Problem, Focal Radius  and Over-torical width}
\author[J.~Ge]{Jian Ge}
\address[Ge]{Beijing International Center for Mathematical Research, Peking University, Beijing 100871, P. R. China.}
\email{jge@math.pku.edu.cn}
\thanks{*Partially supported by NSFC}
\subjclass[2000]{Primary: 53C23; Secondary: 51K10}
\keywords{Focal radius, over-toric band, sphere theorem, Gehring link problem, width}
\maketitle
\begin{abstract}
In this note, we study the Gehring link problem in the round sphere, which motives our study of the width of a band in positively curved manifolds. Using the same idea, we are able to get a sphere theorem for hypersurface in  in the round $\SS^{n}$ in terms of its focal radius as well as the rigidity of Clifford hypersurface in $\SS^{n}$. The $3$-dimension case of our theorems confirm two conjectures raised by Gromov in \cite{Gro2018}.
\end{abstract}
\section{Introduction}
Let $(Y=\TTT^{2}\times [0, 1], g)$ be a Riemannian torical band, where $\TTT^{2}$ is the $2$-dimensional torus. In \cite{Gro2018}, Gromov studied the width of $Y$, which by definition is the distance between two boundary components of $Y$, under a lower scalar curvature bound of $g$. He also conjectured the upper bound of width of such band isometrically embedded in the round $3$-sphere is $\pi/2$. A related conjecture posed in \cite{Gro2018} is the focal radius of an embedded torus in the round $n$-sphere. 

Our starting point is to find the geometric intuition behind the conjectural upper bound $\pi/2$ of the width. Basically all the proofs in this paper can be summarized as:\\ 

``\emph{If two sets are linked in the sphere, they cannot be more than $\pi/2$-apart.}'' \\

The following example serves as our motivation. We take $\TT\subset \SS^{3}$, where $\SS^{3}:=\{(z_{1}, z_{2})\in \CC^{2}| |z_{1}|^{2}+|z_{2}|^{2}=1\}$ is the unit $3$-sphere and  the 2-dimensional Clifford Torus is defined by
$$
\TT^{2}:=\{(z_{1}, z_{2})\in \CC^{2}| |z_{1}|=|z_{2}|=1/\sqrt{2}\}.
$$ 
Let $B(\TT^{2}, r)$ be the $r$-tubular neighborhood of $\TT^{2}$ inside $\SS^{3}$. It is clear that $\SS^{3}\setminus B(\TT^{2}, r)$ has two connected component if $r<\pi/4$, each of which is a solid torus. These tori form a Hopf link in $\SS^{3}$. Their distance is clearly $\le \pi/2$. This reminds us the classical Gehring linking problem in $\RR^{3}$.
\begin{theorem}[\cite{Ort1975}, \cite{BS1983}]
Let $A$ and $B$ are closed curves which are differentiably embedded in $\RR^{3}$ such that they are linked and $\dist(A, B)\ge 1$, then the length of $A$ and $B$ must great or equal to $2\pi$. 
\end{theorem}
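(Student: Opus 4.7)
The plan is to argue by contradiction using minimal surfaces, exploiting a tension between an isoperimetric upper bound and a monotonicity lower bound on the area of a Plateau solution. By the symmetric roles of $A$ and $B$, it suffices to prove $\operatorname{length}(A)\ge 2\pi$. Suppose, for contradiction, that $L:=\operatorname{length}(A)<2\pi$. First I would solve the Plateau problem for $A$ to produce a (possibly branched) minimal immersed disk $D\subset\RR^{3}$ with $\partial D=A$. The classical isoperimetric inequality for minimal disks (going back to Carleman) then gives
\[
\area(D)\le \frac{L^{2}}{4\pi}<\pi.
\]

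Next, the linking hypothesis is what forces an interior point of $D$ to lie far from $\partial D=A$. Since $\operatorname{lk}(A,B)\ne 0$, the algebraic intersection of $B$ with any oriented $2$-chain bounded by $A$ equals $\operatorname{lk}(A,B)$, so in particular $B\cap D\ne\emptyset$. Pick any $p\in B\cap D$. Then
\[
\dist(p,\partial D)=\dist(p,A)\ge \dist(B,A)\ge 1.
\]
Now I would invoke the monotonicity formula for minimal surfaces in $\RR^{3}$: the density ratio $r\mapsto \area(D\cap B(p,r))/(\pi r^{2})$ is nondecreasing on $0<r\le \dist(p,\partial D)$, with limit $\ge 1$ as $r\to 0^{+}$. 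Taking $r=1$ yields $\area(D)\ge \pi$, contradicting the isoperimetric bound. Symmetry completes the proof.

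The main obstacle I expect is the case where $A$ is knotted, so that the Plateau solution $D$ cannot be an embedded disk. One must interpret $D$ as a branched immersed disk (or, equivalently, as an integral $2$-current) and verify that the identification of linking number with algebraic intersection still applies in this generalized class of spanning chains, so that $B\cap D\ne\emptyset$ survives. Branch points are harmless for the monotonicity argument because the density at such a point is a positive integer, and the Carleman-type isoperimetric inequality $\area\le L^{2}/(4\pi)$ is known to hold for the Douglas minimizer. Everything else in the argument is soft, so the whole proof really rests on the compatibility of these two well-known pieces of minimal surface theory with the linking condition.
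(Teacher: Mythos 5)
The paper does not prove this theorem; it is quoted as known background, with the proofs credited to [Ort1975] and [BS1983] (and, for the higher-dimensional Euclidean version, [Gage1980]). So there is no in-paper proof to compare against, and the relevant question is simply whether your argument is sound and how it sits relative to the paper's method for the \emph{spherical} version.

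Your argument is the standard minimal-surface proof and is essentially correct. The chain Plateau $\Rightarrow$ Carleman isoperimetric bound $\area(D)\le L^2/(4\pi)$ $\Rightarrow$ linking forces $B\cap D\ne\emptyset$ $\Rightarrow$ monotonicity at $p\in B\cap D$ gives $\area(D)\ge\pi r^2$ for $r\le\dist(p,\partial D)$ is exactly the mechanism in Gage's and (in refined form) Bombieri--Simon's treatments. Branch points are indeed harmless: the density of a branched conformal harmonic disk is a positive integer, and both the Carleman inequality and the monotonicity formula are stated for the mapping area, so the two estimates are comparable. One caveat worth flagging: you invoke $\operatorname{lk}(A,B)\ne 0$, but the word ``linked'' in the statement (and in the paper's \S 1 definition, where it means ``not separable by an embedded sphere'') is a priori weaker than nonzero linking number. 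Your argument in fact works under the weaker hypothesis that $A$ is not nullhomotopic in $\RR^3\setminus B$, since that alone forces every singular disk spanning $A$ to meet $B$; this covers, e.g., the Whitehead link where $\operatorname{lk}=0$. But it is not immediate that ``not split'' implies ``$A$ not nullhomotopic in the complement of $B$'' in full generality, so you should either state the linking-number hypothesis explicitly (as Gehring and the cited references do) or supply that implication. Finally, as a comparison: the paper's proof of the spherical Gehring problem (\autoref{lem:glp}, \autoref{thm:gehring}) is entirely different in spirit — it uses Toponogov comparison and the Grove--Shiohama convexity of the complement of a far ball to produce an embedded separating $(n-1)$-sphere — a mechanism that exists only under a positive lower curvature bound and has no Euclidean analogue, whereas your Carleman/monotonicity argument does not directly yield the sharp $\pi/2$ constant in $\SS^n$. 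The two approaches are genuinely complementary.
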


We now state our first result. 
\begin{theorem}[Gehring Link Problem in 3-sphere]\label{lem:glp}
Let $A, B$ be two disjoint linked Jordan curves in the unit $3$-sphere $\SS^{3}$. Then 
$$\dist(A, B)\le \pi/2,$$
with equality holds if and only if $A$ and $B$ are the dual great circles in the Hopf fibration.
\end{theorem}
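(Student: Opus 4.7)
The approach is to encode the distance condition algebraically via inner products in $\RR^{4}$, and then use spherical convex geometry to place $B$ inside a contractible subset of $\SS^{3}\setminus A$, which forces $B$ to be null-homotopic and thus unlinked from $A$. Identify $\SS^{3}$ with the unit sphere of $\RR^{4}$, so that $\dist(a,b)\ge\pi/2$ if and only if $\langle a,b\rangle\le 0$. The central object is the spherical polar
\[
A^{\ast}:=\{x\in\SS^{3}:\langle a,x\rangle\le 0\ \text{for all }a\in A\},
\]
which is closed, (spherically) geodesically convex, automatically disjoint from $A$ (since $\langle a,a\rangle=1$), and a proper subset of $\SS^{3}$. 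A standard separating-hyperplane argument applied to the closed convex cone generated by $A^{\ast}$ in $\RR^{4}$ places $A^{\ast}$ inside some closed hemisphere of $\SS^{3}$; the whole proof reduces to controlling the topology of $A^{\ast}$.

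For the strict case $\dist(A,B)>\pi/2$, we have $B\subset \mathrm{int}(A^{\ast})=\{x:\langle a,x\rangle<0\ \forall a\in A\}$, an open spherically convex subset of an open hemisphere. Gnomonic projection from the pole of that hemisphere identifies $\mathrm{int}(A^{\ast})$ diffeomorphically with an open convex subset of $\RR^{3}$, hence it is contractible. Disjointness from $A$ then makes $B$ null-homotopic in $\SS^{3}\setminus A$, so its linking number with $A$ vanishes, contradicting the linking hypothesis. This proves $\dist(A,B)\le\pi/2$.

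For equality, $\dist(A,B)=\pi/2$ gives $B\subset A^{\ast}$, and we stratify by $k=\dim A^{\ast}$. Since $B$ is a Jordan curve, $k\ge 1$. If $k\ge 2$, then $A^{\ast}$ lies in a great $k$-subsphere $\Sigma^{k}\subset\SS^{3}$; should $A^{\ast}=\Sigma^{k}$, the bipolar relation $A\subset A^{\ast\ast}=(\Sigma^{k})^{\ast}=(\Sigma^{k})^{\perp}\cap\SS^{3}$ forces $A$ into a two-point set (when $k=2$) or the empty set (when $k=3$), both impossible for a Jordan curve. Thus $A^{\ast}$ is a proper closed convex subset of $\Sigma^{k}$, hence sits in a closed hemisphere of $\Sigma^{k}$; contracting along the unique minimising geodesics from any relative-interior point shows $A^{\ast}$ is contractible, so $B$ is null-homotopic in $A^{\ast}\subset\SS^{3}\setminus A$, contradicting linking. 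We are left with $k=1$; the only $1$-dimensional closed spherically convex subset of $\SS^{3}$ containing a Jordan curve is a full great circle, so $A^{\ast}$ is a great circle and $B=A^{\ast}$. The hypothesis is symmetric in $A$ and $B$, so the same analysis applied to $B^{\ast}$ gives $A=B^{\ast}=B^{\perp}\cap\SS^{3}$, the dual great circle.

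The hardest step is the equality analysis, which rests on two spherical-convex-geometry inputs: every proper closed convex subset of $\SS^{n}$ sits in a closed hemisphere (via separating hyperplane for its cone in $\RR^{n+1}$), and the bipolar computation $(\Sigma^{k})^{\ast}=(\Sigma^{k})^{\perp}\cap\SS^{3}$, which is very thin for $k\ge 2$. With these in hand, a single contract-to-interior-point argument uniformly disposes of all $k\ge 2$, leaving only the rigid case $k=1$.
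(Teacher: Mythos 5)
Your proof is correct in substance and reaches the same conclusion by working with the same underlying set — your polar $A^{\ast}$ is precisely the paper's $A^{anti}=\SS^{n}\setminus B(A,\pi/2)$ — but the route is genuinely different in technique. The paper obtains convexity of $A^{anti}$ from \autoref{prop:convex}, a Grove--Shiohama / Toponogov-comparison argument valid for any closed manifold with $\sec\ge 1$ (and Alexandrov spaces), then invokes the soul-type structure of convex sets to get the separating sphere $\partial N$. You instead linearize the problem in $\RR^{4}$, getting convexity of $A^{\ast}$ for free as an intersection of closed half-spheres, and use gnomonic projection rather than Riemannian geometry to exhibit contractibility. Your approach is more elementary and entirely self-contained for the round sphere; the paper's buys generality (the Remark after \autoref{thm:gehring} notes it applies to $\sec\ge 1$ and Alexandrov spaces verbatim). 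Your bipolar computation $A\subset A^{\ast\ast}$ also makes the rigidity analysis cleaner and more complete than the paper's rather terse treatment of the equality case.

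One point deserves care. The paper's definition of ``linked'' (\autoref{thm:gehring}'s preamble, first paragraph of Section 1) is the absence of an embedded separating $(n-1)$-sphere, which is strictly weaker than nonzero linking number or non-nullhomotopy of $B$ in $\SS^{3}\setminus A$ — the Whitehead link has vanishing linking number and each component nullhomotopic in the complement of the other, yet is not split. You phrase the contradiction via ``its linking number with $A$ vanishes,'' which is not immediately what is needed. In the strict case this is harmless: gnomonic projection identifies $\mathrm{int}(A^{\ast})$ with an open convex subset of $\RR^{3}$, so a smooth compact convex body therein containing $B$ has an explicit $\SS^{2}$ boundary separating $A$ from $B$. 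In the equality case for $k\ge2$, you only establish contractibility of $A^{\ast}$ and hence nullhomotopy of $B$; to literally contradict ``linked'' in the paper's sense you should note additionally that $\dist(A,A^{\ast})\ge\pi/2$ lets you fatten the compact convex $A^{\ast}$ to an open set still disjoint from $A$ and still diffeomorphic to an open ball (e.g.\ a small metric neighborhood, using that $A^{\ast}$ is an intersection of hemispheres), and then take the boundary sphere. This is a short patch, but without it the argument only excludes homological/homotopical linking, not splitting.
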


Our method of the proof can be easily generlized to higher dimension. The classical Gehring Link problem in $\RR^{n}$ is proved in \cite{Gage1980}.
\begin{theorem}[Spherical Gehring Link Problem]\label{thm:gehring}
If $A^{k}$ and $B^{l}$ are $k$ and $l$ spheres which are embedded in $\SS^{n}$ such that they are linked, where $n=k+l+1$. Then
$$\dist(A, B)\le \pi/2,$$
with equality holds if and only if $A$ and $B$ are the dual great sub-spheres in $\SS^{n}$, i.e. $\SS^{n}=A*B$, where $*$ denotes the spherical join.
\end{theorem}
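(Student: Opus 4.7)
The plan is to prove the contrapositive: if $\dist(A,B) > \pi/2$ then $A, B$ are unlinked, and then refine the analysis at equality. Embed $\SS^n \subset \RR^{n+1}$; the inequality $\dist(a,b) \ge \pi/2$ is equivalent to $\langle a, b\rangle \le 0$, with strict/strict matching.

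For the strict case, I would introduce the open polar region
\[
A^\circ \;:=\; \{y \in \SS^n : \langle a, y\rangle < 0 \text{ for all } a \in A\},
\]
so that $B \subset A^\circ$ and $A^\circ \cap A = \emptyset$. Two observations trivialize the topology of $A^\circ$: it contains no pair of antipodes (any such pair would force some $\langle a, y\rangle = 0$), and it is geodesically convex in $\SS^n$ (any point on the unique shortest arc between $x, y \in A^\circ$ is a positive combination of $x, y$, still negatively paired with every $a$). Hence $A^\circ$ is contractible by geodesic retraction to any chosen basepoint, so $B$ is nullhomotopic in $A^\circ \subset \SS^n \setminus A$. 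Alexander duality, $H_l(\SS^n \setminus A) \cong \tilde{H}^k(A) \cong \ZZ$, then forces the linking class $[B]$ to vanish, contradicting the hypothesis.

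For the equality case, suppose $\dist(A,B) = \pi/2$ with $A, B$ linked, so $B$ lies in the closed polar $A^\bullet = \{y : \langle a, y\rangle \le 0\ \forall a \in A\}$. Set $V = \operatorname{span}(A) \subset \RR^{n+1}$ and split by the position of $0$ relative to $\operatorname{conv}(A)$. If $0 \notin \operatorname{conv}(A)$, the nearest point $v_0 \in \operatorname{conv}(A)$ obeys $\langle v_0, a\rangle \ge |v_0|^2 > 0$ on $A$ by first-order optimality, while $\langle v_0, b\rangle \le 0$ on $B$ by expanding $v_0$ as a convex combination of $A$'s elements; hence $A$ sits in an open hemisphere disjoint from $B$ and is nullhomotopic there, contradicting linking. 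If $0$ lies on the relative boundary of $\operatorname{conv}(A)$ within $V$, the polar cone $D := A^\bullet \cap V$ is salient in $V$ (its lineality space equals $V \cap A^\perp = V \cap V^\perp = \{0\}$), so $D \cap (\SS^n \cap V)$ is a geodesically convex antipode-free subset of the great sphere in $V$, hence contractible; the spherical-join identity $A^\bullet = (D \cap \SS^n_V) \ast (V^\perp \cap \SS^n)$ together with the fact that joining with a contractible factor is contractible then shows $A^\bullet$ itself is contractible, again forcing $B$ nullhomotopic and so unlinked. The only remaining case is $0 \in \operatorname{relint}_V \operatorname{conv}(A)$, which forces $A^\bullet \cap \SS^n = V^\perp \cap \SS^n$, a great sub-sphere of dimension $n - \dim V$; from $\dim A = k$ one gets $\dim V \ge k+1$, and from the embedding of the $l$-sphere $B$ into $V^\perp \cap \SS^n$ one gets $\dim V \le k+1$, so $\dim V = k+1$. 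Then $A$ fills the great $k$-sphere $V \cap \SS^n$, $B$ fills its dual great $l$-sphere, and $\SS^n = A \ast B$.

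The main obstacle is the middle sub-case of the equality analysis, where $0 \in \partial \operatorname{conv}(A)$ inside $V$: here $A^\bullet$ is a priori a large and geometrically awkward subset of $\SS^n$, not obviously trivial. The key point is recognizing its spherical-join structure, which combines a contractible piece inside $V$ with the full antipodal great sphere in $V^\perp$; once this is in hand, contractibility follows from the standard join-with-contractible-factor fact, and Alexander duality closes the argument uniformly across all three sub-cases.
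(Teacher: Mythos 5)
Your proof is correct in substance but takes a genuinely different route from the paper's. The paper's argument is Riemannian: it invokes the Grove--Shiohama convexity statement (\autoref{prop:convex}) to show that $N := \SS^{n}\setminus B(A,\pi/2)$ is a convex set homeomorphic to a disk with a smooth spherical boundary, which then serves directly as the separating sphere; this method generalizes verbatim to closed manifolds with $\sec\ge 1$ (cf.\ the remark after the theorem). Your argument instead works entirely in $\RR^{n+1}$ using the polar cone $A^{\bullet}$: you show the open polar $A^{\circ}$ is geodesically convex and antipode-free, hence contractible, and then invoke Alexander duality to kill the linking class of $B$. Your treatment of the equality case via the position of $0$ relative to $\operatorname{conv}(A)$ in $V=\operatorname{span}(A)$, with the spherical-join decomposition $A^{\bullet}\cap\SS^n = (D\cap\SS^n_V)\ast(V^{\perp}\cap\SS^n)$, is more detailed and self-contained than the paper's terse rigidity discussion. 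The trade-off is that your proof is specific to the round sphere and does not obviously extend to $\sec\ge 1$.

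One point to reconcile with the paper: the paper \emph{defines} ``linked'' to mean that no embedded $(n-1)$-sphere separates $A$ from $B$, which is a priori weaker than ``nonzero linking class'' (e.g.\ a Whitehead-type pair has linking number zero but admits no separating sphere). Your Alexander-duality step therefore proves the contrapositive only under the stronger homological hypothesis. This is easy to repair within your framework: when $\dist(A,B)>\pi/2$, the closed polar $A^{\bullet}=\{y:\langle a,y\rangle\le 0 \ \forall a\in A\}$ is a compact, geodesically convex, antipode-free subset of $\SS^n$ with nonempty interior $A^{\circ}\supset B$ and with $A^{\bullet}\cap A=\emptyset$; its boundary $\partial A^{\bullet}$ (or a nearby level set of $\dist(\cdot,A)$) is then itself an embedded $(n-1)$-sphere separating $A$ from $B$, giving ``unlinked'' in the paper's sense directly and making the duality argument unnecessary. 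Also, a tiny slip: two antipodes in $A^{\circ}$ would give $\langle a,y\rangle<0$ and $\langle a,y\rangle>0$ simultaneously, a contradiction, rather than forcing $\langle a,y\rangle=0$ as you wrote.
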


\begin{remark}
Our proof of \autoref{thm:gehring} remains valid if we replace the $n$-sphere by a closed Riemannian manifold $(M^{n}, g)$ with sectional curvature $\ge 1$ or more general Alexandrov spaces with lower curvature bound $1$ (except for the rigidity part, which should be stated as the spherecial join of $A$ and $B$). For simplicity we focus only on the $n$-sphere.
\end{remark}

\autoref{lem:glp} explains heuristically why the width of a torical band cannot be too large: The complement of the band in $\SS^{3}$ are `linked'. However, this is only partially correct. In general the complement of a torical band is not a classical link. For example when the torus is the boundary of a nontrivial knot in $\SS^{3}$, only one component of the complement is homotopic to a cicle. Nevertheless, we can still make use the idea of `link' as an obstruction to have large width, we call this obstruction `boundary irreducible' see \autoref{def:birr}. In fact, using this idea we prove the following theorem, which confirms a conjecture of Gromov in \cite{Gro2018}.
\begin{theorem}\label{thm:2}
The over-torical width of $\SS^{3}$ denoted by $\otw(\SS^{3})$, is $\pi/2$.
\end{theorem}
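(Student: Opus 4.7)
My plan is to establish $\otw(\SS^3) = \pi/2$ by proving matching lower and upper bounds. For the lower bound, I would use the one-parameter family of Clifford-type tori $T_r := \{(z_1, z_2) \in \SS^3 : |z_1| = \cos r\}$, $r \in (0, \pi/2)$: for any $0 < r_1 < r_2 < \pi/2$, the region bounded by $T_{r_1}$ and $T_{r_2}$ is a torical band of width exactly $r_2 - r_1$, which approaches $\pi/2$ as $r_1 \to 0^+$ and $r_2 \to (\pi/2)^-$.

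For the upper bound, the strategy is to combine \autoref{thm:gehring} with a topological linking argument. Given any over-torical band $Y \hookrightarrow \SS^3$ with boundary tori $T_0, T_1$ and width $w$, the plan is to produce essential simple closed curves $\gamma_i \subset T_i$ forming a topologically linked pair in $\SS^3$. The bound then follows immediately:
\[
w = \dist(T_0, T_1) \le \dist(\gamma_0, \gamma_1) \le \pi/2,
\]
the last inequality by \autoref{thm:gehring} applied with $k = l = 1$ and $n = 3$.

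To produce $\gamma_0$ and $\gamma_1$, I would analyze the two complementary components $\Omega_0, \Omega_1$ of $\SS^3 \setminus Y$, using the classical fact that every embedded torus in $\SS^3$ bounds a solid torus on at least one side. In the \emph{Heegaard case}, both $\Omega_i$ are solid tori whose cores $c_i$ form a Hopf link (as the cores of a genus-one Heegaard splitting of $\SS^3$), and one takes $\gamma_i \subset T_i$ to be a longitude isotopic to $c_i$ through $\Omega_i$. In the \emph{knotted case}, one of the components, say $\Omega_0$, is a solid torus with a (possibly knotted) core $K$ and $\Omega_1$ is the exterior of $K$; one takes $\gamma_0 \subset T_0$ to be a longitude isotopic to $K$ through $\Omega_0$ and $\gamma_1 \subset T_1$ to be a meridian of $K$ bounding a compressing disk in the solid torus $Y \cup \Omega_0$. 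In both cases a direct computation gives $\text{lk}(\gamma_0, \gamma_1) = \pm 1$. The boundary irreducible condition (\autoref{def:birr}) is designed to capture exactly the topological setting in which this linking obstruction can be read off from the two boundary tori.

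The main obstacle I anticipate is making this case analysis uniform across all isometric embeddings: degenerate configurations where $\partial Y$ is compressible in $Y$ itself, or where $\SS^3 \setminus Y$ admits an exotic decomposition, do not fit the clean dichotomy above. The role of boundary irreducibility should be to isolate the good setting, with an additional topological-surgery step showing that a reducible band may be replaced by a boundary irreducible one of at least the same width. A final point to verify is that the ambient distance $\dist(T_0, T_1)$ in $\SS^3$ realizes the intrinsic width of $Y$, which follows from the standard observation that any minimizing path in $\SS^3$ between the two boundary tori must traverse $Y$.
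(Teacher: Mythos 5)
Your lower bound matches the paper's (parallel Clifford tori), but your upper bound argument has a genuine gap that makes it fail for the statement as actually defined.

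The over-torical width is defined as a supremum over isometric \emph{immersions} $\phi: Y \to \SS^3$, not embeddings, and $Y$ is any \emph{proper over-torical band}, meaning only that it admits a nonzero-degree map to $\TTT^2 \times [0,1]$ respecting the boundary; its boundary components $\partial Y_\pm$ need not be tori (they could, for instance, be higher-genus surfaces, or several components). Your entire upper-bound strategy --- analyzing the complementary regions $\Omega_0, \Omega_1 = \SS^3 \setminus Y$, invoking Alexander's theorem about tori bounding solid tori, and extracting linked curves $\gamma_i \subset T_i$ --- presupposes (a) that $Y$ is embedded so the complement makes sense, and (b) that $\partial Y_\pm$ are individual tori. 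Neither is guaranteed. You flag the "main obstacle" at the end of your proposal as reducing a reducible band to an irreducible one, but the more basic obstacle is that a nontrivially immersed band has no well-defined complement in $\SS^3$ at all, so the case analysis never gets off the ground. There is also a subsidiary gap: your dichotomy (Heegaard case vs.\ knotted case) does not exhaust the possibilities even for an embedded $\TTT^2 \times [0,1]$; both $\Omega_0$ and $\Omega_1$ can fail to be solid tori (each $T_i$ then bounds a solid torus on the \emph{band} side), and your construction of $\gamma_0, \gamma_1$ does not cover this.

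The paper's argument is designed precisely to avoid both issues. Working intrinsically on $Y$ with the pulled-back round metric, it takes a level set $\Sigma$ of the distance to $Y_-$ at height $(\pi/2 + d)/2$; since $d > \pi/2$, minimizing geodesics to $Y_-$ from points of $\Sigma$ avoid $Y_+$, and the standard comparison in constant curvature $1$ makes $\Sigma$ locally strictly convex. Pushing forward by the immersion $\phi$, the image is a locally strictly convex surface in $\SS^3$, so by Hadamard's theorem it is an embedded $2$-sphere; consequently $\phi|_\Sigma$ is a covering, forcing $\Sigma$ itself to be a union of embedded spheres separating $Y_-$ from $Y_+$. This contradicts the boundary irreducibility of any over-torical band (\autoref{lem:irred}, which follows simply from $\pi_2(\TTT^2 \times [0,1]) = 0$). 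No assumption about the embedding of $Y$ or the topology of $\partial Y_\pm$ is needed. If you want to salvage your linking approach, you would at minimum have to restrict to embedded torical bands $\TTT^2 \times [0,1] \hookrightarrow \SS^3$, which proves a weaker statement than \autoref{thm:2}.
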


Another estimate can be drawn from the proof of \autoref{thm:gehring} is the estimate of focal radius of torus in $\SS^{3}$. Let $\Sigma^{2}\subset \SS^{3}$ be a smoothly embedded closed surface. Recall that the focal radius of $\Sigma\subset \SS^{3}$ is the largest number $r>0$ such that
$$
\exp: \{(p, v)\in \nu \Sigma| p\in \Sigma, |v|<r\}\to \SS^{3}
$$
is a diffeomorphism onto its image, where $\nu \Sigma$ is the normal bundle of $\Sigma$. The focal radius of $\Sigma\subset \SS^{3}$ will be denoted by $\foc(\Sigma)=\foc(M\subset \SS^{3})$. By the standard comparison argument, it is well known that any hypersurface $\Sigma$ in $\SS^{3}$ has focal radius $\le \pi/2$. In fact if a closed surface $\Sigma^{2}$ in $\SS^{3}$ has focal radius equal to $\pi/2$ then $\Sigma^{2}$ has to be the equatorial 2-sphere, cf. \cite{GW2018}. The following question is asked in \cite{Gro2018}: Let $\Sigma^{n}$ be a smoothly embedded $n$-torus in $\SS^{2n-1}$, what is the largest possible focal radius? Gromov conjectured that the Clifford Torus $\TT^{n}$ is the only torus realizes the conjectured upper bound $\arcsin(1/\sqrt{n})$. The upper bound for $n=2$ actually follows from Corollary C in \cite{GW2020}. We give a different proof of this result as well as the rigidity.

\begin{theorem}\label{thm:1}
Let $\Sigma^{2}$ be a smoothly embedded  $2$-torus in $\SS^{3}$. Then 
$$
\foc(\Sigma)\le \frac{\pi}{4},
$$
with equality holds if and only if $\Sigma$ is the Clifford Torus $\TT^{2}$.
\end{theorem}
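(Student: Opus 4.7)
The plan is to reduce the focal-radius bound to the Spherical Gehring Link Problem (\autoref{lem:glp}) via parallel push-off surfaces on either side of $\Sigma$. Write $r=\foc(\Sigma)$. The definition of focal radius says the normal exponential map is a diffeomorphism on the open normal tube of radius $r$, so for each $\epsilon\in(0,r)$ the parallel hypersurfaces $\Sigma_{\pm\epsilon}\subset\SS^{3}$ at signed distance $\pm(r-\epsilon)$ from $\Sigma$ are smooth embedded tori with $\dist(\Sigma_{+\epsilon},\Sigma_{-\epsilon})=2(r-\epsilon)$.

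First, a topological step. Denote the two components of $\SS^{3}\setminus\Sigma$ by $U_{\pm}$. Since $\pi_{1}(\Sigma)=\ZZ^{2}$ cannot inject into $\pi_{1}(\SS^{3})=1$, the torus $\Sigma$ is compressible on at least one side; combining the Loop Theorem with Alexander's Theorem, at least one side, say $U_{+}$, is a solid torus whose core is some knot $K$, so $U_{-}$ is the exterior of $K$. Pick simple closed curves $\alpha,\beta\subset\Sigma$ where $\alpha$ is the meridian of $U_{+}$ (bounding a disk in $U_{+}$) and $\beta$ is the Seifert longitude of $K$ (bounding a Seifert surface in $U_{-}$). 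A Mayer--Vietoris computation then shows $\{[\alpha],[\beta]\}$ is an integer basis of $H_{1}(\Sigma)\cong\ZZ^{2}$ with algebraic intersection $\alpha\cdot\beta=\pm 1$ on $\Sigma$.

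Next, I push $\alpha$ and $\beta$ along the normal exponential to curves $\alpha_{-}^{\epsilon}\subset\Sigma_{-\epsilon}$ and $\beta_{+}^{\epsilon}\subset\Sigma_{+\epsilon}$. Assembling a Seifert surface for $\alpha_{-}^{\epsilon}$ out of the meridian disk of $U_{+}$ bounding $\alpha$ together with the thin annulus from $\alpha$ to $\alpha_{-}^{\epsilon}$ in the collar, one sees that its algebraic intersection with $\beta_{+}^{\epsilon}$ equals $\alpha\cdot\beta=\pm 1$. Hence $\alpha_{-}^{\epsilon}$ and $\beta_{+}^{\epsilon}$ are linked Jordan curves in $\SS^{3}$ and \autoref{lem:glp} yields
\[
2(r-\epsilon)=\dist(\Sigma_{+\epsilon},\Sigma_{-\epsilon})\le\dist(\alpha_{-}^{\epsilon},\beta_{+}^{\epsilon})\le\frac{\pi}{2}.
\]
Letting $\epsilon\to 0$ gives $r\le\pi/4$.

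For the equality case, suppose $r=\pi/4$. Then $\dist(\alpha_{-}^{\epsilon},\beta_{+}^{\epsilon})\to\pi/2$, and passing to Hausdorff subsequential limits of the pair $(\alpha_{-}^{\epsilon},\beta_{+}^{\epsilon})$ yields a pair of closed subsets of $\SS^{3}$ at distance exactly $\pi/2$. A limiting version of the equality clause of \autoref{lem:glp} forces these limits to be dual great circles in the Hopf fibration; $\Sigma$ is then equidistant (at distance $\pi/4$) from a pair of dual Hopf great circles, which uniquely characterises the Clifford torus $\TT^{2}$. The main technical obstacle is precisely this rigidity step: one must verify that the Hausdorff limits of the linked curve family remain genuine linked Jordan curves realising distance $\pi/2$ so that the equality clause of \autoref{lem:glp} can be invoked, which requires a careful analysis of the behaviour of the normal exponential map at the focal distance $\pi/4$ and the continuity of linking along the family.
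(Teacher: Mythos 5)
Your inequality argument takes a genuinely different route from both of the paper's proofs. The paper proves the bound either by analysing the complement $\SS^{3}\setminus B(\Sigma,\pi/4)$ directly via the convexity lemma (\autoref{prop:convex}), or, in Section 3, via Weyl's tube formula combined with the resolution of the Willmore conjecture. You instead invoke \autoref{lem:glp} as a black box, manufacturing a linked pair of Jordan curves on the parallel push-offs of $\Sigma$. The topological preparation (one side is a solid torus, take the meridian of $U_{+}$ and Seifert longitude of the core) and the computation $\operatorname{lk}(\alpha_{-}^{\epsilon},\beta_{+}^{\epsilon})=\pm1$ are correct (pushing $\alpha$ into $U_{-}$ and $\beta$ into $U_{+}$ is the right choice: the opposite choice gives linking number $0$), as is the claim $\dist(\Sigma_{-\epsilon},\Sigma_{+\epsilon})=2(r-\epsilon)$, since any path between the two sides must cross $\Sigma$. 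So $r\le\pi/4$ goes through cleanly. This is a nice reduction; what it buys is that the convexity machinery is quarantined inside \autoref{lem:glp}, at the cost of some $3$-manifold topology.

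The rigidity half, however, has a genuine gap that you flag but do not close, and I think it is more serious than a routine continuity argument. First, when $r=\foc(\Sigma)=\pi/4$, the curves $\alpha_{-}^{\epsilon}$ converge to the image of $\alpha$ under the boundary map $p\mapsto\exp_{p}(-\tfrac{\pi}{4}\nu(p))$, and at the focal radius this map can fail to be an immersion or an embedding; the limit can degenerate to a point or a non-simple curve, at which point \autoref{lem:glp} (whose hypotheses are \emph{disjoint linked Jordan curves}) simply does not apply. The paper's own remark only says the non-rigidity part of \autoref{thm:gehring} extends to more general spaces, and the rigidity for arbitrary compact linked sets is not established anywhere in the paper. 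Second, the notion of ``linked'' for the two Hausdorff limits is not obviously inherited from linking of the approximating curves. Third, even granting that the limits are dual Hopf circles $A_{0}$ and $B_{0}$, the concluding sentence that ``$\Sigma$ is equidistant from $A_{0}$ and $B_{0}$, hence is $\TT^{2}$'' is not justified: the only information you have is that $A_{0},B_{0}\subset\partial B(\Sigma,\pi/4)$, i.e.\ that each point of $A_{0}$ and $B_{0}$ is at distance $\pi/4$ from $\Sigma$. That does not, by itself, imply every point of $\Sigma$ is at distance $\pi/4$ from $A_{0}$ and from $B_{0}$, which is what ``equidistant'' requires and what would pin $\Sigma$ down as the Clifford torus. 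The paper's argument in \autoref{thm:1k} avoids all of this by showing the two components of $\SS^{3}\setminus B(\Sigma,\pi/4)$ are totally geodesic with empty boundary, hence great circles, from which the Clifford identification of $\Sigma$ as $\partial B(A,\pi/4)$ is immediate; to complete your approach you would either need to reprove a rigidity statement of that strength for the limiting compacta, or replace the Hausdorff-limit step with the complement-of-a-tube argument -- at which point you have essentially switched over to the paper's proof.
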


In fact, we prove the following stronger result
\begin{theorem}\label{thm:1k}
Let $\Sigma^{n-1}$ be an orientable hypersurface embedded in $\SS^{n}$. Suppose $\Sigma$ is not homeomorphic to $\SS^{n-1}$, then $\foc(\Sigma)\le \pi/4$, with equality holds if and only if $\Sigma$ is isometric to the Clifford hypersurface $\SS^{k}(1/\sqrt 2)\times \SS^{l}(1/\sqrt 2)$ for some$k, l\in \NN$ such that $n=k+l+1$.
\end{theorem}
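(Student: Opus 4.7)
The plan is to reduce \autoref{thm:1k} to the Spherical Gehring Link Problem (\autoref{thm:gehring}) by extracting, from the topology of $\SS^{n}\setminus\Sigma$, a pair of linked sub-spheres of complementary dimensions that lie deep inside the two sides of $\Sigma$.

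I begin with the topological setup. Since $\Sigma$ is closed and orientable, Alexander duality yields two open components $M_{+}$ and $M_{-}$ of $\SS^{n}\setminus\Sigma$, and the hypothesis $\Sigma\not\cong\SS^{n-1}$ prevents either $\overline{M_{\pm}}$ from being a closed $n$-ball. Setting $r=\foc(\Sigma)$ and $N=B(\Sigma,r)$, the normal exponential map identifies $N$ with $\Sigma\times(-r,r)$; sliding along normal geodesics exhibits the cores $C_{\pm}:=\overline{M_{\pm}}\setminus N$ as deformation retracts of $\overline{M_{\pm}}$ with $\dist(C_{+},C_{-})\geq 2r$. Alexander duality together with Poincar\'e duality on $\Sigma$ gives, for each $0<k<n-1$, the isomorphism $\tilde H_{k}(M_{+})\oplus \tilde H_{k}(M_{-})\cong \tilde H_{k}(\Sigma)$, nonzero for some $k$ whenever $\Sigma$ has non-trivial middle-degree homology. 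The linking pairing $\tilde H_{k}(M_{+})\otimes \tilde H_{n-k-1}(M_{-})\to\ZZ$ is then non-degenerate modulo torsion, producing non-trivially linked classes that, after deformation, sit in $C_{\pm}$. Representing them by embedded spheres $A^{k}\subset C_{+}$ and $B^{n-k-1}\subset C_{-}$ and applying \autoref{thm:gehring}, I get $2r\leq \dist(A,B)\leq \pi/2$, hence $r\leq \pi/4$.

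For the rigidity part, if $r=\pi/4$ then \autoref{thm:gehring} is saturated, so $A$ and $B$ must be dual great sub-spheres with $\SS^{n}=A*B$. The join structure implies $\dist(p,A)+\dist(p,B)=\pi/2$ for every $p\in\SS^{n}$; combined with $\dist(p,A),\dist(p,B)\geq \pi/4$ on $\Sigma$ (from $A\subset C_{+}$ and $B\subset C_{-}$), this forces both distances to equal $\pi/4$ identically on $\Sigma$. A direct computation in $\SS^{n}\subset \RR^{k+1}\oplus \RR^{l+1}$ identifies the $\pi/4$-equidistant locus from $A$ with $\SS^{k}(1/\sqrt 2)\times\SS^{l}(1/\sqrt 2)$, and since $\Sigma$ is a closed hypersurface of matching dimension contained in this connected locus, the two coincide.

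I expect two main obstacles. The first is representing the linking classes by embedded spheres of prescribed dimension: Steenrod-type obstructions block this in full generality, and the cleanest fix is to broaden \autoref{thm:gehring} to arbitrary topologically linked compacta, which the remark following that theorem already hints is feasible since its proof appears to use only the topological linking. The second is the edge case where $\Sigma$ is a Poincar\'e-type homology sphere not homeomorphic to $\SS^{n-1}$: middle-dimensional homology then vanishes and the argument must be supplemented by a $\pi_{1}$-linking or van Kampen type argument to produce the required linked pair on whichever side fails to be simply connected.
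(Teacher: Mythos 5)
Your approach is genuinely different from the paper's, and the differences matter. The paper never tries to locate linked \emph{sub}-spheres or homology classes deep inside the two sides of $\Sigma$. Instead it works directly with the two complementary regions themselves: setting $N:=\SS^n\setminus B(\Sigma,\pi/4)$, its two components $A,B$ (your $C_{\pm}$, at $r=\pi/4$) satisfy $\dist(A,B)\ge\pi/2$, and \autoref{prop:convex} applied to $\SS^n\setminus B(A,\pi/2)$ (resp.\ $B$) shows each is a totally geodesic convex set whose boundary, \emph{if nonempty}, would be a separating $\SS^{n-1}$ inside the product band $B(\Sigma,\pi/4)\cong\Sigma\times(-\pi/4,\pi/4)$, forcing $\Sigma\cong\SS^{n-1}$ by \autoref{prop:sphere}. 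Under the standing hypothesis $\Sigma\not\cong\SS^{n-1}$ both boundaries are therefore empty, so $A,B$ are great sub-spheres, and the duality $A=\SS^n\setminus B(B,\pi/2)$ gives $k+l+1=n$. In other words, the paper uses the same Grove--Shiohama convexity engine you are invoking through \autoref{thm:gehring}, but it applies it to $A$ and $B$ \emph{as whole sets}, so it never needs to know anything about $H_*(M_{\pm})$ at all.

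That difference is exactly what makes your second obstacle a genuine, unresolved gap rather than a cosmetic one. Your argument relies on producing a nonzero class in $\tilde H_k(M_+;\mathds{Q})$ for some $0<k<n-1$, which requires $\Sigma$ to have nontrivial rational homology in middle degrees. If $\Sigma$ is a rational homology $(n-1)$-sphere with nontrivial $\pi_1$ (such manifolds exist for $n-1\ge 4$, and for $n-1\ge 5$ any finitely presented superperfect group can be realized as $\pi_1$), Alexander duality makes $M_{\pm}$ rationally acyclic and your linking pairing is identically zero. The proposed $\pi_1$ / van Kampen substitute is only gestured at: you would need a genuine \emph{topological} linking statement (no embedded $\SS^{n-1}$ in $\SS^n$ separates $C_+$ from $C_-$), and that is exactly what \autoref{lem:irred} / \autoref{prop:sphere} delivers for the band picture without invoking any homology of the complement. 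Your first obstacle (realizing linking classes by embedded spheres) is, by contrast, benign and your fix is right: the proof of \autoref{thm:gehring} only uses ``no separating $\SS^{n-1}$,'' so it applies verbatim to any linked compacta — but once you are arguing at that level of generality you might as well take $A,B$ to be the entire cores $C_{\pm}$ and you have rediscovered the paper's proof.

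Your rigidity discussion is essentially correct and close to the paper's: once $A,B$ are dual great sub-spheres with $\SS^n=A*B$, the identity $\dist(\cdot,A)+\dist(\cdot,B)=\pi/2$ together with both distances being $\ge\pi/4$ on $\Sigma$ pins $\Sigma$ inside the $\pi/4$-equidistant locus, which is $\SS^k(1/\sqrt2)\times\SS^l(1/\sqrt2)$. One small point worth recording: you need $k,l\ge 1$ for that locus to be connected, and $k=0$ or $l=0$ is excluded precisely because it would make $\Sigma$ a round $\SS^{n-1}$, contradicting the hypothesis.
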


Next theorem shows that spheres are the only hypersurfaces with large focal radius in positively cruved manifold.
\begin{theorem}[A Topological Sphere Theorem]\label{thm:1h}
Let $(M^{n}, g)$ be a simply connected closed Riemannian manifold with $\sec\ge 1$ and $\Sigma^{n-1}$ be a smoothly embedded  orientable hypersurface in $M$. Suppose $\foc(\Sigma)> \frac{\pi}{4}$. Then $M$ is homeomorphic to $\SS^{n}$ and $\Sigma$ is homeomorphic to $\SS^{n-1}$.
\end{theorem}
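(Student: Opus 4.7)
The plan is to combine the Grove--Shiohama diameter sphere theorem with Alexander duality and the Gehring-link estimate \autoref{thm:gehring} (in the $\sec\ge 1$ form indicated by the remark after it).

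First, since $M$ is simply connected and $\Sigma$ is an orientable closed hypersurface, $\Sigma$ separates $M$ into two open components $M_+$ and $M_-$. I fix $r_0\in(\pi/4,\foc(\Sigma))$ and define the cores
\[
C_\pm:=\{\,x\in\bar M_\pm : d(x,\Sigma)\ge r_0\,\}.
\]
Any path from $C_+$ to $C_-$ must cross $\Sigma$, so $d(C_+,C_-)\ge 2r_0>\pi/2$; in particular $\diam(M)>\pi/2$. Both cores are nonempty because the normal exponential map is a diffeomorphism up to radius $\foc(\Sigma)>r_0$ on each side. Grove--Shiohama's diameter sphere theorem then yields that $M$ is homeomorphic to $\SS^n$. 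The case $n=2$ is immediate since any simply connected closed surface is $\SS^2$, so henceforth I assume $n\ge 3$.

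Next I would show $\Sigma\cong \SS^{n-1}$ by contradiction. Assume $\Sigma$ is not a topological sphere, so $\tilde H_k(\Sigma)\ne 0$ for some $0<k<n-1$. The focal radius hypothesis provides a smooth product collar $\bar M_\pm\setminus\operatorname{int}(C_\pm)\cong\Sigma\times[0,r_0]$, giving deformation retractions $\bar M_\pm\simeq C_\pm$. Mayer--Vietoris applied to $M=\bar M_+\cup\bar M_-$ with intersection $\Sigma$ forces, for $0<k<n-1$, the isomorphism $H_k(\Sigma)\cong H_k(\bar M_+)\oplus H_k(\bar M_-)$, so at least one of $H_k(C_\pm)$ is nonzero, say $H_k(C_+)\ne 0$. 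Alexander duality in $M\cong \SS^n$ then provides a nondegenerate linking pairing between $H_k(C_+)$ and $H_{n-k-1}(C_-)$, so I may choose a $k$-cycle $\alpha\subset C_+$ and an $(n-k-1)$-cycle $\beta\subset C_-$ with nontrivial linking number. Applying (the extension of) \autoref{thm:gehring}: since $\alpha$ and $\beta$ are linked in $M$ with $\sec\ge 1$, one must have $d(\alpha,\beta)\le\pi/2$; but $\alpha\subset C_+$ and $\beta\subset C_-$ force $d(\alpha,\beta)\ge 2r_0>\pi/2$, a contradiction. Hence $\Sigma\cong \SS^{n-1}$.

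The main obstacle is applying the Gehring-link estimate to cycles $\alpha$ and $\beta$ of arbitrary topology rather than to the embedded sphere pairs to which \autoref{thm:gehring} is stated. I expect the Toponogov-based proof of \autoref{thm:gehring} to adapt when $\alpha,\beta$ are replaced by transverse pseudo-manifolds representing the relevant homology classes, but a separate lemma may be needed (for instance by representing the dual classes by framed submanifolds in the ambient topological $\SS^n$ and reducing via surgery to a linked sphere pair). Establishing this extension rigorously is where I expect the bulk of the technical work to lie.
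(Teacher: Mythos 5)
Your argument for $M\cong\SS^{n}$ coincides with the paper's: the focal radius hypothesis forces $\diam(M)>\pi/2$ (in the paper this is packaged as the band $Y=\Sigma\times[0,1]$ having width $>\pi/2$), and Grove--Shiohama finishes. The route you propose for $\Sigma\cong\SS^{n-1}$ is, however, genuinely different from the paper's. The paper deduces it from \autoref{prop:sphere}: the convexity statement \autoref{prop:convex} produces a convex disk $N$ with $\partial N\cong\SS^{n-1}$ sitting inside the band $Y=\Sigma\times[0,1]$ and separating its two ends, and an embedded separating sphere of this kind inside $\Sigma\times[0,1]$ forces $\Sigma\cong\SS^{n-1}$ (the separating sphere is homologous to $\Sigma\times\{0\}$ in $Y\simeq\Sigma$, so one gets a degree-one map $\SS^{n-1}\to\Sigma$, which kills both $\pi_{1}(\Sigma)$ and the reduced homology). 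Your route instead passes through Mayer--Vietoris, Alexander duality and a linking estimate.

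Two comments on your version. First, the gap you flag is in fact easy to close and needs no surgery: the Grove--Shiohama convexity argument of \autoref{prop:convex} shows that $N:=M\setminus B(\alpha,\pi/2+\delta)$ (for small $\delta$, and taking $B$ of the support of $\alpha$) is a convex disk containing $\beta$ whenever $\dist(\alpha,\beta)>\pi/2$; since $N$ is contractible and $N\subset M\setminus|\alpha|$, the class of $\beta$ dies in $H_{*}(M\setminus|\alpha|)$, so the linking number vanishes. This is exactly the mechanism in the paper's proof of \autoref{thm:gehring}, and it never uses that $\alpha,\beta$ are spheres. Second, and more seriously, your contradiction hypothesis is too weak: from ``$\Sigma$ is not a topological sphere'' you may only conclude that $\Sigma$ fails to be a \emph{homotopy} sphere, which does not force $\tilde H_{k}(\Sigma)\ne 0$ for some $0<k<n-1$; the obstruction could lie entirely in $\pi_{1}(\Sigma)$ while $\Sigma$ is a homology sphere. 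Such $\Sigma$ do occur as separating hypersurfaces in $\SS^{n}$ for $n\ge 5$ (a homology $(n-1)$-sphere bounds a contractible $W^{n}$, and its double is a topological $\SS^{n}$). The linking/Alexander-duality machinery is homological and cannot see this $\pi_{1}$ obstruction, so your argument as written only yields that $\Sigma$ is a homology sphere. The paper's degree-one-map route through \autoref{prop:sphere} is precisely what handles $\pi_{1}(\Sigma)$; you would need to supply an extra step (e.g.\ producing the embedded separating $\SS^{n-1}$ inside the tube and running the degree-one argument) to close your proof.
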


\begin{remark}
By $h$-cobordism theorem, $(n-1)$-dimensional exotic sphere does not embedded in $\RR^{n}$ for $n\ne 5$. Therefore $\Sigma$ is diffeomorphic to $\SS^{n-1}$ for $n\ne 5$ in \autoref{thm:1h}.
\end{remark}

%-------------------
In Section 1 we prove the Gehring Link problem in sphere. The ideas of the proof are developed further in Section 2, where we provide the proofs of  \autoref{thm:1h} and \autoref{thm:2}.  Another short geometric proof of \autoref{thm:1} is provided in Section 3. \\

\emph{Acknowledgement}: It is my great pleasure to thank Professor Misha Gromov for his comments and interests in our work and Professor Luis Guijarro for comments after reading the first draft of this paper. 

I would like to thank Professor Yuguang Shi for bringing J. Zhu's paper \cite{Zhu2020} to my attention where the author estimated the 3-dimension width using a completely different method.

\section{Proof of the Spherical Gehring Link Problem}
Let $A^{k}, B^{l}$ be two submanifold of $\SS^{n}$, we call $A$ and $B$ are \emph{unlinked} if there exists an embedded $(n-1)$-sphere $\SS^{n-1}$ in $\SS^{n}$ such that $A$ and $B$ lie in complementary hemispheres; otherwise $A$ and $B$ are called \emph{linked}. Note that we do not require $k+l+1=n$ in the definition of `linked', it is only required if we want to define the `Linking number'.

The key idea goes back to Grove-Shiohama's proof of the Diameter Sphere Theorem, cf \cite{GS1977}. Namely we have the following
\begin{prop}\label{prop:convex}
Let  $(M, g)$ be a closed Riemannian manifold with sectional curvature $\sec(g)\ge 1$. Suppose $\diam(M, g) \ge \pi/2$. Then for any point $x\in M$ and $r>\pi/2$, the set $N:=M\setminus B(x, r)$ is either empty or a totally geodesic submanifold with strictly convex (possible empty) boundary.
\end{prop}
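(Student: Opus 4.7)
The plan is to deduce both the strict convexity of $\partial N$ and the convex structure of $N$ from Toponogov's theorem applied to the distance function $\rho(\cdot) := \dist(\cdot, x)$. The driving observation is that $r > \pi/2$ forces $\cos r < 0$, so every comparison computation in the $\SS^2$-model gains a strict sign; throughout I write $N = \rho^{-1}([r, \infty))$.

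For strict convexity of $\partial N$, I would fix a smooth point $y \in \partial N$, a unit tangent $v \in T_y(\partial N)$, and a minimal geodesic $\sigma : [0, r] \to M$ from $y$ to $x$. Since $v \perp \sigma'(0)$, I apply Toponogov's hinge theorem to the hinge $(\sigma, \gamma_v; \pi/2)$, where $\gamma_v$ is the geodesic with $\gamma_v'(0) = v$. The corresponding $\SS^2$-hinge has third side $\tilde c_t$ satisfying $\cos \tilde c_t = \cos r \cdot \cos t$. Since $\cos r < 0$ and $\cos t < 1$ for $t > 0$, we obtain $\tilde c_t < r$, and hence $\dist(\gamma_v(t), x) \leq \tilde c_t < r$. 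Thus $\gamma_v(t) \in B(x, r)$: the geodesic $\gamma_v$ exits $N$ immediately, which is the strict convexity of $\partial N$.

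For the convex/totally geodesic part, take any $y_1, y_2 \in N$ and a minimal geodesic $\gamma : [0, L] \to M$ between them (with $L \leq \pi$ by Bonnet-Myers). Build the comparison triangle $\tilde y_1 \tilde y_2 \tilde x \subset \SS^2$ with matching side lengths. In $\SS^2$ the super-level set $\{\tilde\rho \geq r\}$ is a geodesic ball of radius $\pi - r < \pi/2$ around the antipode of $\tilde x$, and such small balls are convex, so the comparison geodesic $\tilde\gamma$ remains inside it. Toponogov's triangle comparison then upgrades this to $\rho(\gamma(t)) \geq \tilde\rho(\tilde\gamma(t)) \geq r$, giving $\gamma \subset N$.

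The main technical obstacle I anticipate is the non-smoothness of $\rho$ on the cut locus of $x$: both the hinge and triangle arguments above must be read in the barrier/support-function framework, picking any single minimizing geodesic at each point as a supporting direction. A secondary point is that the strict-convexity computation simultaneously shows $r$ is a Grove-Shiohama regular value of $\rho$, so $\partial N$ is an embedded topological hypersurface and $N$ a topological manifold-with-boundary, justifying the phrase ``totally geodesic submanifold with strictly convex boundary'' in the proposition.
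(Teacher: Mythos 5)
The paper offers no proof of this proposition at all: it simply attributes the idea to Grove--Shiohama's proof of the Diameter Sphere Theorem and cites \cite{GS1977}. Your blind write-up therefore has nothing to be compared against directly, but it is a correct and reasonably complete reconstruction of the argument that the citation is gesturing at, and it is the right argument: both halves are exactly the Toponogov hinge/triangle comparisons that underlie \cite{GS1977}.

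Two small points worth tightening. First, in the triangle-comparison half, you should note why the comparison triangle is non-degenerate: since $\rho(y_1),\rho(y_2)\ge r>\pi/2$ and for $\sec\ge 1$ every geodesic triangle has perimeter $\le 2\pi$, one automatically gets $L=\dist(y_1,y_2)\le 2\pi-2r<\pi$, so $\tilde\gamma$ is the unique minimizing side in $\SS^{2}$ and the convexity of the ball of radius $\pi-r<\pi/2$ applies cleanly. Second, the closing remark that the hinge computation ``simultaneously shows $r$ is a Grove--Shiohama regular value'' is a slight over-reach as written: your hinge estimate says that moving perpendicular to \emph{one chosen} minimizing segment $\sigma$ decreases $\rho$, whereas Grove--Shiohama regularity asks for a single direction making an obtuse angle with \emph{every} minimizing segment to $x$ at once. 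You do not actually need that stronger statement, though --- the convexity of $N$ that you established in the second half already gives, via the Cheeger--Gromoll structure theory for closed (locally) convex subsets, that $N$ is an embedded topological submanifold with (possibly empty) boundary, totally geodesic in its interior dimension; that is precisely the conclusion the proposition asserts. With those two clarifications your proof is sound.
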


For any closed set $A\subset M$, it is clear that 
$$M\setminus B(A, r)=\cap_{x\in A}\{M\setminus B(x, r)\}.$$
It follows that $M\setminus B(A, r)$ is a totally geodesic submanifold with strictly convex (possible empty) boundary. Moreover if $\partial N$ is nonempty, $N$ is homeomorphic to the standard $n$-disk.

\begin{proof}[Proof of \autoref{thm:gehring} ]
Suppose $\dist(A, B)>\pi/2$, then $B\subset \SS^{n}\setminus B(A, \pi/2)$. If $A$ is not a great sub-sphere $\SS^{k}\subset \SS^{n}$, then the set $N:=\SS^{3}\setminus B(A, \pi/2)$ is a connected convex set with nonempty boundary. It follows that $N$ is homeomorphic to a disk. By smoothing the distance function $d_{A}(\cdot)$, we conclude $\partial N$ is diffeomorphic to the sphere $S^{n-1}$. But $B\subset \SS^{n}\setminus B(A, \pi/2)$. Therefore it contradict to the assumption that $A$ and $B$ are linked in $\SS^{3}$. It follows that $\dist(A, B)\le \pi/2$.

Suppose $\dist(A, B)=\pi/2$, then $A^{anti}:=\SS^{n}\setminus B(A, \pi/2)$ must has empty boundary. Since $A^{anti}$ is convex and in particular totally geodesic, $B$ is isometric to $\SS^{l}$ for some $l\in \{1,\cdots, (n-1)\}$. Apply the same discussion to $B$, we know $A$ is isometric to $\SS^{k}$ for some $k\in \{1,\cdots, (n-1)\}$. Since $A$ and $B$ are linked, $n\le k+l+1$.
\end{proof}

The proof can be carried verbatim to the case when the sphere $\SS^{n}$ is replaced by a closed Riemannian manifold $(M, g)$ with $\sec\ge 1$.

\section{The complement of a non-sphercial band are linked}
One crucial step in the proof of \autoref{thm:gehring}  in the previous section is to produce an embedded sphere $\SS^{n-1}$ in $\SS^{n}$ which separates $A$ and $B$,  provide they are more than $\pi/2$ apart. In particular this separating hypersphere represents a nontrivial element in $\pi_{n-1}(\SS^{n}\setminus \{A, B\})$. This put a restriction on the topology of the complement. In this section, we focus on the complement instead of the set $A$ and $B$.

Let's recall several definitions in \cite{Gro2018}. A \emph{proper band} is a connected manifold with two distinguished disjoint non-empty subset in the boundary $\partial{Y}_{-}, \partial{Y}_{+}$ such that
$\partial Y=\partial{Y}_{-}\cup \partial{Y}_{+}$. A proper band $Y$ is called \emph{over-torical} if  there exists a map 
$$
f: Y\to \underline{Y}:=\TTT^{n-1}\times [0, 1],
$$
with nonzero degree and respect the boundaries: $\partial Y_{\pm}\to \partial \underline{Y}_{\pm}$. We introduce the following definition:
\begin{definition}\label{def:birr}
Let $Y$ be a $n$-dimensional proper band. We call $Y$ is \emph{boundary reducible} if there exists an embedded $(n-1)$-sphere $S^{n-1}\subset Y$, such that $S$ separates $\partial Y_{-}$ from $\partial Y_{+}$. Otherwise $Y$ is called \emph{boundary irreducible}.
\end{definition}

\begin{remark}Note that if $Y$ is boundary irreducible, it is still possible to find an embedded $(n-1)$-sphere in $Y$ that does not bound a $n$-ball. Namely it is possible $Y$ is reducible in the classical sense. For example, take connected sum of a torical band with any $3$-manifold.
\end{remark}

For a closed orientable manifold $\Sigma^{n-1}$, let's consider the band $Y:=\Sigma^{n-1}\times [0, 1]$. Suppose that $Y$ is isometrically embedded in $(M^{n}, g)$, a closed Riemannian manifold with $\sec(M, g)\ge 1$. We will identify $Y$ with its image in $M$. Let $Y_{-}$ and $Y_{+}$ be the boundaries $Y\times \{0\}$ and $Y\times \{1\}$ of $Y$. 
Suppose $R:=\dist(Y_{-}, Y_{+})>\pi/2$. Let 
$$N:=M^{n}\setminus B(Y_{+}, R-\varepsilon),\ \ \ \varepsilon<(R-\pi/2)/10.$$
By \autoref{prop:convex}, it is clear that $Y_{-}\subset N$ and $N$ is homeomorphic to a disk, whence $\partial N$ is homeomorphic to $\SS^{n-1}$. In particular, we find an embedded $(n-1)$-sphere in $Y$ which separates $Y_{-}$ from $Y_{+}$. Namely $Y$ can be written as a connected sum of two manifolds, this is only possible if $\Sigma^{n-1}$ itself is homeomorphic to $\SS^{n-1}$. Therefore, we just proved
\begin{prop}\label{prop:sphere}
Let $(M^{n}, g)$ be a closed orientable manifold with $\sec\ge 1$. Let $Y=\Sigma^{n-1}\times [0, 1]$ be a band isometrically embedded in $M^{n}$. Suppose $\dist(Y_{-}, Y_{+})>\pi/2$, then $\Sigma$ is homeomorphic to $\SS^{n-1}$.
\end{prop}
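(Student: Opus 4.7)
The plan is to apply \autoref{prop:convex} to produce an embedded topological $(n-1)$-sphere inside $Y$ that separates the two boundary components, and then to extract the topology of $\Sigma$ from a degree calculation against the product structure. The main obstacle I anticipate is showing that the separating sphere, a priori only a hypersurface of $M$, actually sits inside the band $Y$; once this is in hand the rest is fairly standard topology.

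First, let $R := \dist(Y_{-}, Y_{+}) > \pi/2$, fix $0 < \varepsilon < (R - \pi/2)/2$, and set $N := M \setminus B(Y_{+}, R - \varepsilon)$. By \autoref{prop:convex} (in the closed-set form noted immediately after its statement), $N$ is totally geodesic with strictly convex, nonempty boundary and is homeomorphic to a disk; hence $\partial N$ is homeomorphic to $\SS^{n-1}$. Since $\dist(Y_{-}, Y_{+}) = R > R - \varepsilon$, we have $Y_{-} \subset \operatorname{int} N$ and $Y_{+} \subset M \setminus N$, so $\partial N$ separates $Y_{-}$ from $Y_{+}$ in $M$.

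To see that $\partial N \subset Y$, I would observe that points of $\partial N$ have distance exactly $R - \varepsilon$ from $Y_{+}$ and at least $\varepsilon$ from $Y_{-}$, so $\partial N \cap \partial Y = \emptyset$. Because $Y$ is a codimension-$0$ submanifold of $M$, this forces $\partial N \cap Y$ to be open in $\partial N$; it is also closed in $\partial N$ as an intersection of closed sets, and it is nonempty because the connected band $Y$ meets both sides of the separator $\partial N$. Since $\partial N \cong \SS^{n-1}$ is connected (for $n \ge 2$), $\partial N \cap Y = \partial N$, as desired.

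With the embedded $(n-1)$-sphere $S := \partial N$ sitting inside $Y = \Sigma \times [0,1]$ and separating the two boundary components, I would consider the product projection $\pi : Y \to \Sigma$ and the composition $f := \pi|_{S} : \SS^{n-1} \to \Sigma$. The degree of $f$ equals the signed intersection number of $S$ with a generic fiber $\{x\} \times [0,1]$, which the separation property forces to be $\pm 1$. A degree $\pm 1$ map surjects on integer homology (via the cap product with the fundamental class) and on $\pi_{1}$, so $\pi_{1}(\Sigma)$ is a quotient of $\pi_{1}(\SS^{n-1}) = 0$ (for $n \ge 3$) and $\Sigma$ has the homology of a sphere. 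Thus $\Sigma$ is a simply-connected homology $(n-1)$-sphere, hence a homotopy sphere, and the (topological) Poincar\'e conjecture gives $\Sigma \cong_{\mathrm{TOP}} \SS^{n-1}$. The case $n = 2$ is trivial, as any connected closed $1$-manifold is $\SS^{1}$.
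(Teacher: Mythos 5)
Your proof is correct and follows the paper's overall strategy: apply \autoref{prop:convex} to produce a convex disk $N := M \setminus B(Y_+, R-\varepsilon)$ whose boundary sphere $\partial N$ separates $Y_-$ from $Y_+$, and then read off the topology of $\Sigma$. Where you depart from the paper's write-up, you are tightening rather than rerouting. First, the paper asserts without justification that the separating sphere lies ``in $Y$''; your clopen argument (the distance bounds $\dist(\partial N, Y_+) = R-\varepsilon$ and $\dist(\partial N, Y_-) \ge \varepsilon$ keep $\partial N$ off $\partial Y$, so $\partial N \cap Y$ is open as well as closed in the connected sphere $\partial N$, and nonempty since the connected band meets both sides of the separator) supplies exactly the missing step. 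Second, the paper concludes with ``$Y$ can be written as a connected sum of two manifolds, this is only possible if $\Sigma$ is homeomorphic to $\SS^{n-1}$,'' which is terse and, as literally phrased, not a connected sum (the two pieces have boundary $\Sigma \sqcup S^{n-1}$). Your degree argument --- $\pi|_S : S \to \Sigma$ has degree $\pm 1$ by signed intersection with a product fiber $\{x\}\times[0,1]$, a degree-one map between closed oriented manifolds surjects on $\pi_1$ and on integral homology, so $\Sigma$ is a simply connected homology $(n-1)$-sphere, hence a homotopy sphere, hence a topological sphere by Poincar\'e --- is a complete and cleaner version of that last inference, with the trivial $n=2$ case handled separately. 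Same geometry, same key lemma; yours just carries the topology through explicitly rather than leaving it to the reader.
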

Clearly \autoref{prop:sphere} and the Diameter Sphere Theorem (cf. \cite{GS1977}) imply \autoref{thm:1h} immediately.  Now we move to the study of the non-spherical band.
\begin{proof}[Proof of \autoref{thm:1k}]
We consider the set 
$$N:=\SS^{n}\setminus B(\Sigma, \pi/4).$$
Since $\Sigma$ is orientable, the set $N$ has two connected components, denote them by $\{A, B\}$. Since 
$$
\dist(A, B)\ge \dist(A, \Sigma)+\dist(B, \Sigma)=\pi/2.
$$
Then there are two possibilities. One is $\diam(M, g)=\pi/2$. It follows from the diameter rigidity theorem of Gromoll-Grove that $(M, g)$ is isometric to a CROSS or homeomorphic to $\SS^{n}$. If $(M, g)$ is not heomorphic to a sphere, then $A$ and $B$ are the dual pairs of 

By our assumption that $\Sigma$ is not homeomorphic to $\SS^{n-1}$, it follows that $A$ and $B$ are both totally geodesic submanifolds in $\SS^{n}$ with empty boundary. Therefore $A$ and $B$ are great sub-spheres in $\SS^{n}$, namely, $A=\SS^{k}$ and $B=\SS^{l}$ for some $k, l\in \{1, \cdots, n-1\}$. 
it follows that $k+l+1\le n$. On the other hand 
$$
A=\SS^{n}\setminus (B(B, \pi/2)),
$$
and vice versa, then $k+l+1=n$. i.e. $A$ and $B$ are the dual great sub-spheres in $\SS^{n}$. It follows that $\Sigma$ is the Clifford hypersurface $\SS^{k}(1/\sqrt 2)\times \SS^{l}(1/\sqrt 2)$.
\end{proof}

Now we turn to the estimate of the over-toric width. The proof is similar as above. Since $\pi_{2}(\underline{Y})=0$, we have the following
\begin{lemma}\label{lem:irred}
Let $Y$ be a $3$-dimensional proper over-torical band, then $Y$ is boundary irreducible.
\end{lemma}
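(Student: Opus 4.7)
The plan is to argue by contradiction, exploiting the fact that $\underline Y=\TTT^{2}\times[0,1]$ is a $K(\ZZ^{2},1)$, so in particular $\pi_{2}(\underline Y)=0$. Assume $Y$ is boundary reducible and pick an embedded $2$-sphere $S\subset Y$ separating $\partial Y_{-}$ from $\partial Y_{+}$. Since $S$ is connected and separates the connected manifold $Y$, the complement $Y\setminus S$ has exactly two components whose closures I denote $Y^{-}$ and $Y^{+}$, chosen so that $\partial Y_{\pm}\subset Y^{\pm}$; each $Y^{\pm}$ is a compact oriented $3$-manifold with boundary $\partial Y_{\pm}\cup S$.

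Next I would use $\pi_{2}(\underline Y)=0$ to conclude that $f|_{S}\colon S^{2}\to\underline Y$ is null-homotopic, and use this null-homotopy to cap $Y^{-}$ off along $S$ by a $3$-ball. Concretely, set $\hat Y^{-}:=Y^{-}\cup_{S} D^{3}$ and extend $f|_{Y^{-}}$ across $D^{3}$ to a continuous map $\hat f\colon \hat Y^{-}\to\underline Y$ that still agrees with $f$ on $\partial Y_{-}=\partial\hat Y^{-}$. Composing with the projection $\pi\colon \TTT^{2}\times[0,1]\to\TTT^{2}$ produces $g:=\pi\circ\hat f\colon \hat Y^{-}\to\TTT^{2}$, and I would compute $g_{*}[\partial Y_{-}]\in H_{2}(\TTT^{2};\ZZ)\cong\ZZ$ in two ways. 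On the one hand, since $\partial Y_{-}$ is the entire boundary of the oriented $3$-manifold $\hat Y^{-}$, the long exact sequence of the pair $(\hat Y^{-},\partial\hat Y^{-})$ gives $[\partial Y_{-}]=0$ in $H_{2}(\hat Y^{-};\ZZ)$, whence $g_{*}[\partial Y_{-}]=0$. On the other hand, $g$ restricted to $\partial Y_{-}$ coincides with $f|_{\partial Y_{-}}$ under the identification $\TTT^{2}\times\{0\}\cong\TTT^{2}$, and the standard boundary formula---applying $\partial$ to $f_{*}[Y,\partial Y]=\deg(f)\cdot[\underline Y,\partial\underline Y]$---yields $\deg(f|_{\partial Y_{-}})=\deg(f)\neq 0$, so that $g_{*}[\partial Y_{-}]=\deg(f)\cdot[\TTT^{2}]\neq 0$. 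This contradiction establishes the lemma.

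The step I expect to be the main obstacle is the orientation bookkeeping in the identity $\deg(f|_{\partial Y_{-}})=\deg(f)$, since the band map $f$ is only assumed continuous rather than smooth. I would handle it by working entirely with singular fundamental classes in $H_{3}(Y,\partial Y)$ and $H_{3}(\underline Y,\partial\underline Y)$ and appealing to naturality of the connecting homomorphism, or alternatively by a smooth approximation of $f$ transverse to a regular value in the interior of $\underline Y$. Once this boundary-degree identity is secured, the remainder of the argument is the short homological observation that the fundamental class of $\partial Y_{-}$ dies in $H_{2}(\hat Y^{-};\ZZ)$.
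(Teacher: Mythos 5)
Your argument is correct and follows exactly the route the paper indicates: the paper states the lemma with only the one-line justification that $\pi_{2}(\underline{Y})=0$, and your proof (null-homotoping $f|_{S}$, capping $Y^{-}$ by a $3$-ball, and comparing $g_{*}[\partial Y_{-}]=0$ with $\deg(f|_{\partial Y_{-}})=\deg(f)\neq 0$) supplies precisely the details that one-liner is gesturing at. The only stylistic simplification available is that one can skip the capping construction entirely by noting $[S]=\pm[\partial Y_{-}]$ in $H_{2}(Y;\ZZ)$ (since $Y^{-}$ is a $3$-chain with boundary $\partial Y_{-}\cup S$) and then contrasting $f_{*}[S]=0$ with $f_{*}[\partial Y_{-}]=\deg(f)\cdot[\TTT^{2}\times\{0\}]$, but this is the same idea.
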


Recall that the over-torical width of $\SS^{3}$, denoted by $\otw(\SS^{3})$ is defined as the supremum of numbers $d$ such that there exists a proper over-torical band $Y$ of width $d$ and
an isometric immersion
$$
\phi: Y \to \SS^{3}.
$$
Since the $r$-neighborhood of Clifford Torus in $\SS^{3}$ provides a torus band of width arbitrarily close to $\pi/2$, $\otw(\SS^{3})\ge \pi/2$. To prove \autoref{thm:2}, it suffices to show $\otw(\SS^{3})\le \pi/2$. Let $Y$ be as above, we have
\begin{prop}
The width of $Y$ is less than or equal to $\pi/2$.
\end{prop}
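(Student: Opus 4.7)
The plan is to argue by contradiction, using \autoref{lem:irred} as the topological obstruction. Suppose $R := \dist(\partial Y_-, \partial Y_+) > \pi/2$, and fix $\epsilon > 0$ with $R - \epsilon > \pi/2$. Writing $f(y) := \dist(y, \partial Y_+)$, consider
$$
N := \{y \in Y : f(y) \ge R - \epsilon\} = Y \setminus B(\partial Y_+, R - \epsilon);
$$
by the definition of width, $\partial Y_- \subset N$, while $N$ is disjoint from a neighbourhood of $\partial Y_+$. The goal is to produce an embedded $2$-sphere $S \subset \operatorname{int}(Y)$ separating $\partial Y_-$ from $\partial Y_+$, which would contradict \autoref{lem:irred}.

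Since $\phi : Y \to \SS^3$ is an isometric immersion between $3$-manifolds, it is a local isometry, so $(Y,g)$ has constant sectional curvature $+1$. The first task is to reproduce the convexity argument of \autoref{prop:convex} inside $Y$: for each fixed $q \in \partial Y_+$, Toponogov comparison against $\SS^3$ shows that $\dist(\cdot, q)$ is strictly concave along any $Y$-geodesic passing through a point at distance $> \pi/2$ from $q$, whence the closed set $N_q := \{y \in Y : \dist(y,q) \ge R - \epsilon\}$ is strictly convex in $Y$. Intersecting the $N_q$ over $q \in \partial Y_+$ gives that $N$ is geodesically convex with strictly convex boundary inside $\operatorname{int}(Y)$.

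The next task is to extract a separating sphere from this convex structure. Smooth $f$ in the region $\{f > \pi/2\}$, where it has no critical points of Morse index below $3$; the component $U$ of $\{f < R - \epsilon\}$ containing $\partial Y_+$ then has, as its frontier inside $\operatorname{int}(Y)$, a single smooth closed surface $S$. By the previous step $S$ is a strictly convex hypersurface in a space of constant curvature $+1$, so its Gauss map into $\SS^2$ is a local diffeomorphism and, since $S$ is compact, a covering; as $\SS^2$ is simply connected, $S$ is an embedded $2$-sphere. Since $S$ sits between $\partial Y_+ \subset U$ and $\partial Y_-$ (at $f$-value $\ge R$), it separates $\partial Y_-$ from $\partial Y_+$, contradicting \autoref{lem:irred} and forcing $R \le \pi/2$.

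The main obstacle is that \autoref{prop:convex} is stated for closed ambient manifolds, whereas $Y$ has boundary. I would need to verify carefully that the Toponogov step really produces convexity inside $Y$: the point is that minimizing $Y$-geodesics between two points of $N_q$ live in $\{y : \dist(y,q) > \pi/2\}$, which is safely away from $\partial Y_+$, while any excursion toward $\partial Y_-$ is harmless because $\partial Y_- \subset N$. The smoothing step and the Gauss-map identification of $S$ are secondary technical matters, both standard in this positively curved setting.
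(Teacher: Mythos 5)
Your overall strategy mirrors the paper's: assume the width exceeds $\pi/2$, produce a strictly convex separating surface, and conclude via Gauss--Bonnet/sphere identification and \autoref{lem:irred}. But the convexity step, which you flag as the main obstacle, is where the argument actually breaks, and your proposed patch does not close the gap.

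You want $N_q=\{y\in Y:\dist(y,q)\ge R-\varepsilon\}$ to be convex in $Y$ by running Toponogov along minimizing $Y$-geodesics between points of $N_q$. The issue with $\partial Y_-$ is not whether the geodesic stays inside $N$ topologically, but whether it is a geodesic of the constant-curvature space form at all: a minimizing curve in the manifold-with-boundary $Y$ can run along $\partial Y_-$, and there it fails to solve the geodesic equation, so the second-variation/Toponogov comparison that produces concavity of $\dist(\cdot,q)$ is simply not available. Calling such excursions ``harmless because $\partial Y_-\subset N$'' conflates containment in $N$ with validity of the comparison argument. Your first claim, that the minimizing geodesic between points of $N_q$ lives in $\{\dist(\cdot,q)>\pi/2\}$ away from $\partial Y_+$, is also circular: that containment is exactly the convexity conclusion you are trying to establish. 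The paper avoids all of this by working not with a convex sublevel region but with a single level set $\Sigma:=\rho^{-1}\!\left(\frac{\pi/2+d}{2}\right)$ of $\rho=\dist(\cdot,Y_-)$; for $x\in\Sigma$, any curve realizing $\rho(x)$ cannot touch $\partial Y_+$ (else $\dist(x,Y_-)\ge d$) and cannot touch $\partial Y_-$ before its endpoint (by minimality), so it is a genuine geodesic of the space form in the interior, and the local second-fundamental-form comparison for $\Sigma$ goes through without ever invoking global convexity of a region.

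Two smaller remarks. First, identifying $S$ as a sphere ``because the Gauss map into $\SS^2$ is a covering'' is not quite right intrinsically in $Y$, since there is no canonical Gauss map into $\SS^2$ for a hypersurface in a general $3$-manifold; the clean statement is Gauss--Bonnet plus the Gauss equation ($K_S=1+\kappa_1\kappa_2>0$, so $\chi(S)>0$ and an orientable two-sided $S$ is a sphere), or, as the paper does, push $\Sigma$ forward by $\phi$ to $\SS^3$ and invoke Hadamard's theorem there. Second, you assert the frontier of $U$ is a \emph{single} closed surface; the paper explicitly allows $\Sigma$ to be a disjoint union of separating spheres and reduces to a single one by joining them with thin cylinders, and that step is needed in general.
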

\begin{proof}
Suppose the width $d:=\dist(Y_{-}, Y_{+})>\pi/2$, where we equipped $Y$ with the pull back of the round metric on the sphere. Consider the distance function $\rho: Y\to \mathbb R$, defined by
$$
\rho(x)=\dist(x, Y_{-}),
$$
where the distance $\dist$ is defined by the shortest curve connecting $x$ to $Y_{-}$. Since we make no assumption on the convexity of $Y_{\pm}$, such a curve might intersect the boundary $Y_{+}$. However, if the width $d=\dist(Y_{-}, Y_{+})>\pi/2$, we know for any point
$$
x\in \Sigma:=\rho^{-1}\left(\frac{\pi/2+d}{2}\right),
$$
any geodesic connecting $x$ to $Y_{-}$ that realizes $\rho(x)$ does not intersect $Y_{+}$. Since $Y$ has constant curvature $1$, the standard comparison argument shows that $\Sigma$ is locally strictly convex in $Y$. Therefore under the isometric immersion $\phi: Y\to \SS^{3}$, its image $\overline{\Sigma}:=\phi(\Sigma)\subset \SS^{3}$ is also locally strictly convex in $\SS^{3}$. By the classical theorem of Hadamard (cf. \cite{Had1897}), all such surfaces must be embedded, hence $\overline{\Sigma}$ is an embedded  $\SS^{2}$. Since $\phi$ is an immersion, it follows that $\phi$ is a covering map. Therefore $\Sigma$ is a disjoint union of embedded separating $2$-spheres in $Y$ (It might be disjoint only if $Y_{-}$ or $Y_{+}$ has more than one connected components). By connecting these spheres via cylinders we get an embedded separating $2$-sphere in $Y$. This contradict to \autoref{lem:irred}. Therefore $d\le \pi/2$.
\end{proof}

\section{Yet another proof of the \autoref{thm:1}}
In this section, we give another proof of \autoref{thm:1} based on Weyl's Tube formula and the solution of Willmore Conjecture. Let $\Sigma^{2}\subset \SS^{3}$ be an embedded $2$-torus with focal radius $r\in [\pi/4, \pi/2]$. It follows from the definition of focal radius that
\begin{equation}\label{eq:1}
\rvol(B(\Sigma, r))\leq\rvol(\SS^{3})= 2\pi^{2}.
\end{equation}
By the volume estimate \autoref{lem:tube}, which will be proved in next section, and the fact that $\sin(2r)\ge \cot(r)$ if $r\in [\pi/4, \pi/2]$, we have
\begin{equation}\label{eq:2}
\rvol(B(\Sigma, r))=\sin(2r)\area(\Sigma)\ge \cot(r)\area(\Sigma).
\end{equation}
Combining \eqref{eq:1} and \eqref{eq:2}, we have
\begin{equation}\label{eq:3}
\cot(r)\cdot\area(\Sigma)\leq 2\pi^{2}.
\end{equation}
On the other hand by the solution of Willmore Conjecture we have
\begin{equation}\label{eq:4}
\int_{\Sigma}1+\left( \frac{\kappa_{1}+\kappa_{2}}{2}\right )^{2}d\mu \ge 2\pi^{2},
\end{equation}
where $\kappa_{1}$ and $\kappa_{2}$ are the principal curvatures of $M$ with respect to the standard metric on $\SS^{3}$.
It follows by the Gauss-Bonnet formula that 
\begin{equation}\label{eq:5}
2\pi^{2}\le \int_{\Sigma}1+\left( \frac{\kappa_{1}+\kappa_{2}}{2}\right )^{2}d\mu\le \int_{\Sigma}\left(\frac{\kappa_{1}^{2}+\kappa_{2}^{2}}{2}\right) d\mu.
\end{equation}
Since the focal radius of $M$ equals to $r$, it follows that 
$$|\kappa_{i}|\le \cot(r)$$ 
for $i=1,2$. In fact this can be seen by touching each point $p\in \Sigma$ by a geodesic sphere with radius $r$ in $\SS^{3}$ from both sides of $\Sigma$, or we can use the theorem of \cite{GW2020}. Plugging them back to \eqref{eq:5} yields
\begin{equation}\label{eq:6}
2\pi^{2}\le \cot^{2}r\cdot \area(M).
\end{equation}
Using \eqref{eq:3}, we have
$$
2\pi^{2}\le \cot^{2}r\cdot\area(M)\le \cot r\cdot2\pi^{2},
$$
i.e. $\cot(r)\ge 1$, which implies $r=\pi/4$ since we assume $r\ge \pi/4$. The rigidity part of the theorem follows from the rigidity part of the Willmore Conjecture.

\section{Volume of the tube}
In this section, we calculate the volume of the $r$-neighborhood of $\Sigma^{2}\subset \SS^{3}$. It is a special case of Weyl's Tube Formula. In fact it implies the focal radius $\le \pi/4$ by monotonicity of $\sin(2r)$.
\begin{lemma}[Tube Formula]\label{lem:tube}
Let $\Sigma$ be an embedded $2$-torus in $\SS^{3}$. Then for any $r\le \foc(\Sigma)$, we have
$$
\rvol(B(\Sigma, r))=\sin(2r)\area(\Sigma).
$$
\end{lemma}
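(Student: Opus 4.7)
The plan is to evaluate the volume of the tube by pulling it back to the normal bundle and applying the Weyl-type Jacobi field computation for a hypersurface in constant curvature $1$, and then to use the fact that $\Sigma$ is a torus (so $\chi(\Sigma)=0$) to collapse the intermediate expression down to $\sin(2r)\area(\Sigma)$.

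First, since $\Sigma$ is an orientable closed surface embedded in $\SS^{3}$, its normal bundle is trivial; pick a smooth unit normal field $\nu$. Because $r\le \foc(\Sigma)$, the map $(p,t)\mapsto \exp_p(t\nu(p))$ is a diffeomorphism from $\Sigma\times[-r,r]$ onto $B(\Sigma,r)$, so by the area formula
\[
\rvol\bigl(B(\Sigma,r)\bigr)=\int_\Sigma\!\!\int_{-r}^{r} J(p,t)\,dt\,d\mu(p),
\]
where $J(p,t)$ is the Jacobian determinant of the normal exponential map. The standard Jacobi field calculation in a space of constant sectional curvature $1$ (parallel-transporting a principal frame along the normal geodesic) gives
\[
J(p,t)=\bigl(\cos t-\kappa_{1}(p)\sin t\bigr)\bigl(\cos t-\kappa_{2}(p)\sin t\bigr),
\]
where $\kappa_1,\kappa_2$ are the principal curvatures of $\Sigma$ with respect to $\nu$.

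Next I would expand this as $\cos^{2}t-(\kappa_1+\kappa_2)\sin t\cos t+\kappa_1\kappa_2\sin^{2}t$ and integrate in $t$ over $[-r,r]$. The cross term integrates to zero, while $\int_{-r}^{r}\cos^{2}t\,dt=r+\tfrac12\sin(2r)$ and $\int_{-r}^{r}\sin^{2}t\,dt=r-\tfrac12\sin(2r)$. Therefore
\[
\rvol\bigl(B(\Sigma,r)\bigr)=\Bigl(r+\tfrac{\sin(2r)}{2}\Bigr)\area(\Sigma)+\Bigl(r-\tfrac{\sin(2r)}{2}\Bigr)\int_{\Sigma}\kappa_{1}\kappa_{2}\,d\mu.
\]

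The decisive step is the topological input. The Gauss equation for $\Sigma\subset\SS^{3}$ reads $K_\Sigma=1+\kappa_1\kappa_2$, so $\int_\Sigma \kappa_1\kappa_2\,d\mu=\int_\Sigma K_\Sigma\,d\mu-\area(\Sigma)=2\pi\chi(\Sigma)-\area(\Sigma)$ by Gauss--Bonnet. Since $\Sigma$ is a torus we have $\chi(\Sigma)=0$, hence $\int_\Sigma\kappa_1\kappa_2\,d\mu=-\area(\Sigma)$. Substituting this into the displayed line makes the two $r\cdot\area(\Sigma)$ contributions cancel and the two $\tfrac{\sin(2r)}{2}\area(\Sigma)$ contributions add, giving exactly $\sin(2r)\area(\Sigma)$. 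The only technical point that needs care is the Jacobi field computation yielding the factored Jacobian; once that is in hand, the rest is bookkeeping plus the Gauss--Bonnet cancellation that specifically uses the torus hypothesis.
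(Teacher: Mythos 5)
Your proof is correct and takes essentially the same approach as the paper: compute the Jacobian $(\cos t - \kappa_1\sin t)(\cos t - \kappa_2\sin t)$ of the normal exponential map in constant curvature $1$, integrate (the odd cross term vanishes), and invoke the Gauss equation plus Gauss--Bonnet with $\chi(\TTT^2)=0$ to eliminate the $\kappa_1\kappa_2$ contribution. The only cosmetic difference is that the paper performs the Gauss--Bonnet cancellation by regrouping the integrand as $(\cos^2 t - \sin^2 t) + (1+\kappa_1\kappa_2)\sin^2 t$ before integrating, whereas you integrate first and then substitute $\int_\Sigma \kappa_1\kappa_2\,d\mu = -\area(\Sigma)$; both are the same calculation.
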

\begin{proof}
Let 
$$
\Sigma(t):=\{x\in \SS^{3}|x =\exp_{p}(tv),  p\in M, v\in \nu_{p}M\}
$$
be the parallel hypersurface with signed distance $t\in [-r, r]$ to $M$. For any $p\in \Sigma^{2}$ let $\kappa_{1}(p), \kappa_{2}(p)$ be the two principal curvature of $\Sigma$ at $p$. Using Fermi coordinate and co-area formula we have
\begin{align*}
\rvol(B(\Sigma, r))&=\int_{-r}^{r}\area(\Sigma(t)) dt\\
&=\int_{-r}^{r}\cos^{2}t\int_{\Sigma}(1-\kappa_{1}\tan t )(1-\kappa_{2}\tan t )d\mu dt\\
&=\int_{\Sigma}\int_{-r}^{r} (\cos^{2}t-(\kappa_{1}+\kappa_{2})\cos^{2}t\cdot\tan t+\kappa_{1}\kappa_{2}\sin^{2}t )dt d\mu\\
&=\int_{\Sigma}\int_{-r}^{r} (\cos^{2}t+\kappa_{1}\kappa_{2}\sin^{2}t )dt d\mu\\
&=\int_{\Sigma}\left(\int_{-r}^{r} (\cos^{2}t-\sin^{2}t)dt+\int_{-r}^{r} ((1+\kappa_{1}\kappa_{2})\sin^{2}t)dt\right)d\mu\\
&=\int_{\Sigma}\int_{-r}^{r} (\cos^{2}t-\sin^{2}t)dt d\mu\\
&=\sin(2r)\area(\Sigma).
\end{align*}
\end{proof}

\bibliographystyle{alpha}
\bibliography{mybib}

\bigskip
\noindent
{\small Jian Ge}\\
{\small Beijing International Center for Mathematical Research,\\
Peking University, Beijing 100871, China
} \\
{\small E-mail address: jge@math.pku.edu.cn}\\
\end{document}